\documentclass[12pt,amstex]{article}

\usepackage[usenames]{color}

\usepackage{amscd}
\usepackage{amsmath}
\usepackage{amssymb}
\usepackage{graphicx}

\textwidth=18.00 true cm \textheight=21.50 true cm
\baselineskip=18pt \voffset=-2cm \hoffset=-2.5cm
\parindent=6ex

\newtheorem {theo} {\bf Theorem} [section]
\newtheorem {prop} [theo] {\bf Proposition}
\newtheorem {cory} [theo] {\bf Corollary}
\newtheorem {lem} [theo] {\bf Lemma}
\newtheorem {defn} [theo] {\bf Definition}

\newtheorem {rem} [theo] {\bf Remark}

\newcommand{\qed}{\nopagebreak\hfill{\vrule width6pt height6pt depth0pt}}
\newenvironment{proof}[1]{\vspace{0.4 cm}\noindent{\em #1.}}{\qed\par
\vspace{\topsep}\vspace{\partopsep}}

\newcommand{\be}{\begin{eqnarray}}
\newcommand{\ee}{\end{eqnarray}}
\newcommand{\benn}{\begin{eqnarray*}}
\newcommand{\eenn}{\end{eqnarray*}}
\newcommand{\bse}{\begin{equation}}
\newcommand{\ese}{\end{equation}}
\newcommand{\bsenn}{\begin{displaymath}}
\newcommand{\esenn}{\end{displaymath}}
\newcommand{\logand}{\;\;{\rm and }\;\;}

\newcommand{\logif}{\;\;{\rm if }\;\;}


\newcommand{\R}{\mathbb{R}}

\newcommand{\Z}{\mathbb{Z}}
\textwidth=17.50 true cm \textheight=22.50 true cm \baselineskip=18pt
\voffset=-2.5cm \hoffset=-2.0cm
\parskip=8pt
\parindent=6ex

\begin{document}

\title{A Remarkable Summation Formula, Lattice Tilings, and Fluctuations}
\markright{Remarkable Summation}
\author{J. J. P. Veerman, L. S. Fox, P. J. Oberly}

\maketitle

\noindent
\section*{Abstract.}
\begin{small} We derive and prove an explicit formula for the sum of the fractional parts of certain
geometric series. Although the proof is straightforward, we have been unable to locate any reference
to this result.

This summation formula allows us to efficiently analyze the average behavior of certain common
nonlinear dynamical systems, such as the angle-doubling map, $x \mapsto 2x$ modulo 1. In particular, one
can use this information to analyze how the behavior of individual orbits deviates from the global average
(called fluctuations). More generally, the formula is valid in $\R^m$, where expanding maps give rise to
so-called number systems. To illustrate the usefulness in this setting, we compute the fluctuations of
a certain map on the plane.
\end{small}

\section{Introduction.}
\label{chap:intro}
\setcounter{figure}{0} \setcounter{equation}{0}

\noindent
The aim of this note is to state and prove an explicit formula that expresses the sum of the fractional parts
of certain geometric series and to demonstrate its usefulness. This formula (Theorem \ref{thm:summation})
and some of its corollaries are discussed in Section \ref{chap:main}. In this introduction, we discuss
an important, and intuitive, special case (Lemma \ref{lem:intro}).
In Sections \ref{chap:fluctuations} and \ref{chap:tileflucs}, we will use these results to better understand the
statistical behavior of certain (expanding) dynamical systems.

Consider the map $T:[0,1)\rightarrow [0,1)$ defined by
\bse
T:x\rightarrow 2x \mod 1\,.
\label{eq:expandingmap}
\ese
This is often called the \emph{angle doubling map}, because it describes the dynamics restricted to the unit
circle in the complex plane of $z\mapsto z^2$, \cite{devaney}. The action of that map is that it doubles
the angle. Thus repeated applications of $T$ tend to separate nearby initial points exponentially fast.
For that reason, this map serves as a paradigm for \emph{chaotic}\footnote{The word \emph{chaotic} can be given a precise meaning \cite{banks}. However, we will not need it.} dynamical systems.
A convenient way to study the behavior of orbits under $T$ is to write the initial condition $x$ in base 2:
\bse
x = 0.d_1d_2d_3\cdots = \sum_{i=1}^\infty\,2^{-i}d_i\,,
\label{eq:expansion}
\ese
where each $d_i$ is either 0 or 1. It is easy to see that
\bsenn
T(x)= 0.d_2d_3d_4 \cdots \quad \textrm{and so} \quad T^n(x)=0.d_{n+1}d_{n+2}d_{n+3}\cdots \,.
\esenn

In nonlinear dynamical systems such as this one, it is as a rule too much to ask for exact solutions. One
often settles for studying \emph{average} behavior, and, as we shall see later, the distribution of the
deviations from the average.

So let us do some averaging. One checks that $\frac 37$, $\frac 67$, $\frac 57$ form an orbit of period 3 under $T$.
The three points of the periodic orbit sum to 2, and so their average equals $\frac 23$. Now let us look at this
in base 2. It is a simple exercise to show that the three points are
\bsenn
0.\overline{011} \quad  0.\overline{110} \quad 0.\overline{101}\,.
\esenn
The overbar is used to indicate periodicity. So $0.\overline{011}$ indicates $0.011011011\cdots$. What we observe is
that the average of the orbit equals the average of the number of ones in the binary expansion of the initial condition
$x$. This is not a coincidence! We leave it to the reader to check the following example of period 5:
\bsenn
\frac{5}{31}\quad \frac{10}{31} \quad \frac{20}{31} \quad \frac{9}{31} \quad \frac{18}{31} \,.
\esenn
In binary notation, this becomes
\bsenn
0.\overline{00101} \quad 0.\overline{01010} \quad 0.\overline{10100} \quad 0.\overline{01001} \quad 0.\overline{10010}\,.
\esenn
Again, one checks that the average of the orbit equals the average number of ones in the binary expansion.
For periodic orbits, this is actually pretty easy to see by formally summing the binary expressions
of all the periodic points. We leave that as an exercise, and move on to a stronger statement.

We introduce the notation $p_n=\sum_{i=1}^n\,d_i$ to denote the running number of ones in the binary
representation of $x$ in the digits 1 through n (to the right of the decimal point).
Also, to get our notation closer to that of the general case, we write $\left\{2x\right\}$ for $T(x)$.
The braces are standard notation for ``fractional part". So $T^i(x)$ becomes $\left\{2^ix\right\}$.

We will now give an informal proof of the following, perhaps surprising, fact that the average of $\left\{2^ix\right\}$ converges if and only if the average of $p_n$ does, and that the limits are equal.
A more general statement with a complete proof will follow in Section \ref{chap:main}.

\begin{lem} If the base 2 expansion of $x\in[0,1)$ does not end in all ones, then
\bse
\lim_{n\rightarrow\infty}\left(\frac 1n\,\sum_{i=0}^{n-1}\,\left\{2^ix\right\}-\frac{p_n}{n}\right)=0\,.
\label{eq:average}
\ese
\label{lem:intro}
\end{lem}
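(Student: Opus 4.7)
The plan is to establish the sharp, closed-form identity
\[
\sum_{i=0}^{n-1}\{2^i x\} \;=\; p_n \;+\; \{2^n x\} \;-\; x,
\]
after which the lemma is immediate: subtracting $p_n$ and dividing by $n$ yields
\[
\frac{1}{n}\sum_{i=0}^{n-1}\{2^i x\} - \frac{p_n}{n} \;=\; \frac{\{2^n x\} - x}{n},
\]
whose magnitude is at most $1/n$.

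The first ingredient is the series representation $\{2^i x\} = \sum_{j=1}^\infty 2^{-j} d_{i+j}$. This is precisely where the hypothesis enters: if the expansion of $x$ ended in all ones, the tail $d_{i+1} d_{i+2} \cdots$ could be identically $1$ from some point on and the right-hand side would evaluate to $1$ rather than to $\{2^i x\}$. Under the stated hypothesis the representation is correct for every $i$, and all terms being nonnegative, Tonelli's theorem legitimates the interchange of summations below.

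The main (purely combinatorial) step is then to swap sums and count the coefficient of each $d_k$ in
\[
\sum_{i=0}^{n-1}\sum_{j=1}^{\infty} 2^{-j} d_{i+j}.
\]
Writing $k = i+j$, the admissible $j$ range over $\max(1, k-n+1) \le j \le k$, so a brief case split gives coefficient $1 - 2^{-k}$ when $k\le n$ and $2^{n-k} - 2^{-k}$ when $k > n$. Regrouping:
\[
\sum_{k=1}^{n}(1-2^{-k})d_k \;+\; \sum_{k>n}(2^{n-k}-2^{-k})d_k
\;=\; p_n \;+\; \sum_{k>n} 2^{n-k} d_k \;-\; \sum_{k\ge 1} 2^{-k} d_k,
\]
and the last two sums are exactly $\{2^n x\}$ and $x$, yielding the identity. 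I do not anticipate a real obstacle---everything is accounting---but some care is needed to justify the swap of summations and to handle the boundary index $k=n$ correctly; the hypothesis on the expansion is what makes the whole computation meaningful.
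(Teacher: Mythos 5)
Your proof is correct and follows essentially the same route as the paper: the identity $\sum_{i=0}^{n-1}\{2^ix\}=p_n+\{2^nx\}-\{x\}$ that you derive by interchanging the two sums and collecting the geometric series attached to each digit is exactly the paper's Fractional Part Summation Formula (Theorem \ref{thm:summation}) specialized to $A=2$, proved there by the same digit-by-digit summation (organized as a split of $x$ into the first $n$ digits and the tail rather than a Tonelli swap). Your version is, if anything, slightly more careful than the paper's own informal proof of the lemma, since you keep the tail term $\{2^nx\}$ explicit instead of truncating the expansion and passing to the limit afterwards.
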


\begin{proof}
For simplicity, let us set $x=\sum_{i=1}^{n}\, 2^{-j}d_j$.
We first establish the following equality:
\bse
\sum_{i=0}^{n-1}\, \left\{2^i x\right\} = p_n-x\,.
\label{eq:sum}
\ese
The crux is the observation that we can do the summation for each digit separately,
and only after that add up the results. So for fixed $1\leq j\leq n$ this gives
\bsenn
\sum_{i=0}^{n-1}\, \left\{2^i2^{-j}d_j\right\}= \sum_{i=0}^{j-1}\,2^i 2^{-j} d_j=
\frac{2^j-1}{2-1}\;2^{-j}d_j= d_j-d_j2^{-j}\,.
\esenn
(Note that in the first summation the fractional part of $2^i 2^{-j} d_j$ is zero if $i\geq j$.)
Summing over $j$ from 1 to $n$ now gives equation \eqref{eq:sum}. The lemma then follows
by taking an average of equation \eqref{eq:sum}, and then taking the limit as $n$ tends to infinity.
\end{proof}

There is a problem with taking the limit in this proof if the expansion of $x$ ends in all ones,
because the function $x\rightarrow \{x\}$ is not continuous as $x$ approaches 1 from the left. This will
be overcome in the more general treatment in Section \ref{chap:main}. For now, suffice it to say that the
exceptional set is countable and so will not affect the statistical reasoning in Section \ref{chap:fluctuations}.

Equation \eqref{eq:sum} has an amusing corollary. We know that $\sum_{i=0}^{n-1}\,2^ix=(2^n-1)x$.
Since $\left[2^i x\right]=2^ix-\left\{2^i x\right\}$, we now obtain the following.
\bse
\sum_{i=0}^{n-1}\,\left[2^ix\right]=2^nx-p_n\,.
\label{eq:integersum}
\ese

The reason we present these basic facts is twofold. First, the map $T$ is probably one of the most studied
maps in mathematics, and yet, to our surprise, with one exception \cite{veer},
we do not know of any explicit mention of these simple facts, including Lemma \ref{lem:intro}.
Second, and no less important, these identities can be useful. In fact, \eqref{eq:sum} was employed in
\cite{veer} to greatly simplify the characterization of the statistical properties of orbits of a one
parameter family of piecewise linear circle maps (see Section \ref{chap:fluctuations}).

For the interested reader, we remark that the study of the average behavior of \emph{typical} orbits of a dynamical system is a branch of mathematics, called ergodic theory, with far-reaching consequences
for physics \cite{AA}. It is natural and interesting to investigate the deviations from average behavior; such deviations are called \emph{fluctuations} and have a history of hundreds of years.
In the classical theory, one assumes or establishes statistical independence of certain events ---
coin tosses say --- and from this one derives Gaussian distributions for the fluctuations. Some of the
mathematical history can be found in the short, but delightful book \cite{Kac}. The application of
these ideas to physics form the basis for the sub-discipline of physics called
\emph{statistical physics} (see \cite{Reichl}).

Since the 1980s, however, Tsallis and others have noted that in systems with correlations over long distances --- which make the system less chaotic --- statistical independence might not hold
(see \cite{tsallis}[Chapters 5 and 7]). They proposed that fluctuations in such systems might
typically have distributions that are not Gaussian,
but a generalization thereof (\cite{tsallis}[Section 3.2] and references therein). This is currently
a very active area of research. In general, these fluctuations are very hard to analyze, in particular
if they do not have a Gaussian distribution. However, recently two low-dimensional systems with
non-Gaussian fluctuations have been analyzed. It is intriguing that one of them \cite{bountis}
seems to conform to the theory proposed, while the other does not \cite{veer}.

\section{Fluctuations in One Dimension.}
\label{chap:fluctuations}
Let $f$ be a map from the unit interval to itself. Consider
\bsenn
S(n,x):=\sum_{i=0}^{n-1} f^i(x)\,.
\esenn
The \emph{fluctuations} are defined as the deviations of the partial sums from the average.
Let us assume, for simplicity, that the average of $f$ (over $x$) equals $1/2$. Then by
fluctuations, we mean the distribution of
\bsenn
S(n,x)-\frac n2\,.
\esenn
as $x$ varies uniformly over all initial conditions and parameters \emph{while holding $n$ fixed}.
We are interested in establishing whether a properly rescaled version of these fluctuations tends to a
limiting distribution as $n$ tends to infinity.

An important generic example is the angle doubling map $T$ in the introduction.
\bsenn
S(n,x):=\sum_{i=0}^{n-1} \{2^i x\} \,.
\esenn
Almost all $x \in [0,1)$ have a unique base 2 expansion. Thus Lemma \ref{lem:intro} tells us what
the result is. For large $n$, the distribution of $S$ tends to that of $\sum_{i=1}^{n}\,d_i$ because the contribution of $\left\{2^n x\right\} - x$ is negligible.
The resulting distribution is of course the binomial one
\bsenn
\textrm{Prob}(S\leq k)=2^{-n}\sum_{x=0}^k \binom{n}{x}\,,
\esenn
which has mean $\mu=\frac n2$ and standard deviation $\sigma=\frac 12 \sqrt{n}$. It is well-known that for
large $n$, the binomial distribution is increasingly well approximated by the Gaussian distribution
$\frac{1}{\sigma\sqrt{2\pi}}\,\int_{-\infty}^k\,e^{-\frac 12 \left(\frac{x-\mu}{\sigma}\right)^2}\,dx$. With $\mu$ and $\sigma$
as given above, we conclude that
\bse
\lim_{n\rightarrow\infty}\, \textrm{Prob}\left(\frac{2(S-\frac n2)}{\sqrt{n}}\leq k\right)=
\frac{1}{\sqrt{2\pi}}\,\int_{-\infty}^k\,e^{-\frac 12 x^2}\,dx \,.
\label{eq:normaldistr}
\ese
The derivative with respect to $k$
is the \emph{probability density} $\frac{1}{\sqrt{2\pi}}\,e^{-\frac 12 k^2}$
associated with the fluctuations (rescaled by $1/\sqrt{n}$).

In the above case, the conclusion is that the distribution of the fluctuations --- upon appropriate
rescaling --- is Gaussian. This is expected for systems like $T$ that are expanding and therefore chaotic.
However, the interesting cases, in view of Tsallis' theory mentioned in the introduction, are those for
which the fluctuations have a non-Gaussian distribution that can be analyzed exactly. Since there are very
few of these, any explicit examples are very useful to gain intuition. We briefly present the example from \cite{veer}.

\begin{figure}[pbth]
\begin{center}
\includegraphics[width=3.5cm]{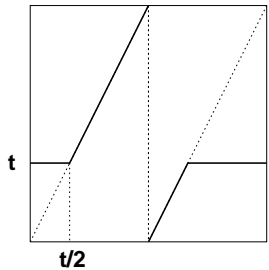}
\caption{Construction of a family of circle maps $f_t(x)$ given in \eqref{eq:ftx} based on truncations of the
angle doubling map. The horizontal axis is $x$. The vertical axis is $f_t(x)$. }
\label{fig:flatspotmap}
\end{center}
\end{figure}
We look at a family of maps that are considerably less chaotic, see Figure \ref{fig:flatspotmap}.
\bse
f_t(x)=\left\{ \begin{matrix} t   & \textrm{ for } 0\leq x\leq \frac t2\\[0.1cm]
                              2x  & \textrm{ for } \frac t2\leq x\leq \frac 12\\[0.1cm]
                              2x-1  & \textrm{ for } \frac 12\leq x\leq \frac{1+t}{2}\\[0.1cm]
                              t  & \textrm{ for } \frac{1+t}{2}\leq x\leq 1 \end{matrix}    \right.
\label{eq:ftx}
\ese
Following \cite{bountis, veer}, in this case, we consider
\bsenn
S(n,t,x):=\sum_{i=0}^{n-1} f_t^i(x) \,,
\esenn
and we define the fluctuations as the deviations of the partial sums from the average over $x$ \underline{and} $t$.
From symmetry considerations, one can see that this average is $1/2$. It is well-known that each of the maps
$f_t$ has dynamics similar (in fact, semi-conjugate) to a pure rotation \cite{veer} (and references therein).
That means that for each fixed value of $t$, each iterate of the map advances on average by a constant
$\rho(t)\in[0,1]$, called the rotation number. It turns out that $\rho(t)$ is an
increasing function of $t$. Points under $f_t$ rotate faster if $t$ is large. Thus iterations for different
$t$ drift apart linearly in $n$. To get something that converges to a distribution, we will need to
shift $S$ by the average $1/2$ and rescale by $1/n$. So we consider
\bse
\frac 1n\left(S(n,t,x)-\frac n2\right)=\frac 1n\,\sum_{i=0}^{n-1} \left(f_t^i(x)-\frac 12\right)\,,
\label{eq:sumcirclemap}
\ese
where now both $t$ and $x$ are uniformly distributed over $[0,1)$ while $n$ is fixed but large.
The hope is that the distribution of this tends (as $n\rightarrow \infty$) to a fixed non-trivial distribution.

\begin{figure}[!pbth]
\begin{center}
\includegraphics[height=4.0cm,width=5.0cm]{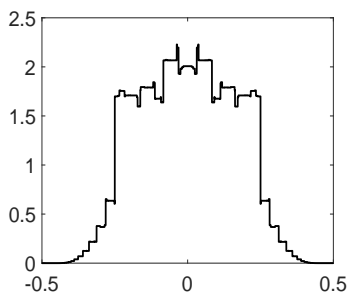}
\caption{The probability density function for the fluctuations of the family of maps $f_t$.}
\label{fig:ghostbuster}
\end{center}
\end{figure}
The computations are now substantially more complicated, but an important step is again a summation to
which Lemma \ref{lem:intro} is applied. Here we can only give a very cursory idea how the
proof goes. For the details, we refer to \cite{veer}.
We need to average $S(n,t,x)$ in \eqref{eq:sumcirclemap} over $x$ and $t$.
It is sufficient to consider only cases where the rotation is rational (the rest has
measure zero). So choose $t$ in the interval $I_{p/q}$ of values such that the rotation number of $f_t$
is equal to $\frac pq$ where $\gcd(p,q)=1$. With that choice, $x=t$ (see Figure \ref{fig:flatspotmap})
is a stable periodic orbit of period $q$ which attracts all initial conditions. From that one can derive
that it essentially determines the sum in \eqref{eq:sumcirclemap} for all $t\in I_{p/q}$. So, for large $n$
\bsenn
\frac 1n\left(S\left(n, t,x\right)-\frac n2\right)\approx
\frac 1q\,\sum_{i=0}^{q-1} \left(\left\{2^i t\right\}-\frac 12\right)\,.
\esenn
Lemma \ref{lem:intro} is applied to evaluate this sum and average over $t\in I_{p/q}$
and over $x$. Finally, we obtain the overall probability density associated with the fluctuations
as a sum over the rationals $\frac pq$ between 0 and 1. It can be computed numerically to
arbitrary precision and has a remarkable (and very non-Gaussian) appearance as can be appreciated in Figure
\ref{fig:ghostbuster}.

\section{A Summation Formula.}
\label{chap:main}

In this section, we consider expansions in $\R^m$ that are generalizations of those in \eqref{eq:expansion}. The number $2$ in that equation
is replaced by a matrix $A$ which is expanding (i.e. all eigenvalues have
modulus greater than 1) and whose entries are integers so that $A\Z^m$ forms a sublattice of $\Z^m$.
The determinant of $A$ is $\pm q$ for some integer $q$ greater than 1. This means that $A$ maps
the unit cube $[0,1]^m$ to a parallelotope (an $m$ dimensional ``parallelogram") of volume $q$.
Call two elements of $\Z^m$ equivalent if they differ by an element of $A\Z^m$. There are precisely $q$ equivalence classes of points in $\Z^m$. A set $D$ is a \emph{standard digit set} if it contains
exactly one element for each equivalence class. In number theoretic terms, $D$ corresponds to a
\emph{complete set of residues modulo $A$}. Without loss of generality, we require that $0\in D$.
This is what is called a standard number system. We summarize the definition here.

\begin{defn} A standard number system $(A,D)$ in $\R^m$ is given by an expanding matrix with
integer coefficients $A$, a digit set $D$ of cardinality $q=|\det A|$ containing exactly one element
of each coset of $A\Z^m$ in $\Z^m$, one of which is the origin.
\label{def:numbersystem}
\end{defn}

Consider vectors in $\R^m$ of the following form
\bse
x = A^{\ell}d_{-\ell}+\cdots A^0d_0+A^{-1}d_1+A^{-2}d_2+\cdots=
\sum_{i=-\ell}^\infty\,A^{-i}d_i\,,
\label{eq:expansion-gen}
\ese
where the \emph{digits} $d_i$ in the above expression are chosen from a standard digit set $D$ and
$d_{-\ell}\ne 0$. Since $A$ is expanding, $A^{-1}$ is well-defined with eigenvalues of modulus
strictly smaller than 1, and so the sum in \eqref{eq:expansion-gen} converges exponentially.

\begin{defn} For $x\in \R^m$ and an expansion ${\boldsymbol d}$ given by
$x=\sum_{i=-\ell}^\infty\,A^{-i}d_i$, define the \emph{integer part} $[x]$ and the
\emph{fractional part} $\{x\}$ as
\bse
[x({\boldsymbol d})] = \sum_{i=-\ell}^0\,A^{-i}d_i \quad \logand \quad
\{x({\boldsymbol d})\} = \sum_{i=1}^\infty\,A^{-i}d_i \,,
\label{eq:IandF}
\ese
Denote the set of \emph{fractional parts} by $\cal F$ and the set of \emph{integral parts} by $\cal I$.
\label{def:int-frac-part}
\end{defn}

\noindent
{\bf Remark.} It is important to bear in mind that the definition above is different from the
usual definition of integral and fractional parts in that these depend on the expansion
${\boldsymbol d}$ of $x$. For example, with this definition, $0.111\cdots$ (in base 2) has
fractional part 1 and integral part 0, while $1.000\cdots$ has
fractional part 0 and integral part 1.

\vskip .1in
It turns out that for the number systems in $\R^m$ that interest us, these cases have measure
zero. Since we aim to do statistical calculations, we can safely neglect them. Some more details
are given in Section \ref{chap:tilings}. Because of this, and for notational convenience,
we drop the dependence on the expansion from our notation, and write $\{x\}$ and $[x]$ from now on.

Now we get to our main results.

\begin{theo}[Fractional Part Summation Formula] If $(A,D)$ is a standard number system, then for
any $x\in \R^m$ with a base $A$ expansion $\sum_{i=-\ell}^{\infty}\,A^{-i}d_i$
\bsenn
(A-I)\,\sum_{i=0}^{n-1}\,\{A^ix\}=\sum_{i=1}^{n}\,d_i + \left\{A^nx\right\} - \{x\}\,.
\esenn
\label{thm:summation}
\end{theo}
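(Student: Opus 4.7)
The plan is to generalize the argument from Lemma \ref{lem:intro}: express each iterate $\{A^i x\}$ directly in terms of the base-$A$ digits of $x$, swap the order of summation, and then close the argument using the matrix telescoping identity $(A-I)\sum_{i=0}^{k-1} A^i = A^k - I$, which holds for any square matrix $A$ by a one-line induction.

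The first step is to establish the shifted-expansion identity
\[
\{A^k x\} = \sum_{j=1}^{\infty} A^{-j} d_{j+k} \qquad (k \geq 0),
\]
which is the analog of $T^k(x) = 0.d_{k+1}d_{k+2}\cdots$ used in the introduction. It holds because the ``left'' portion $\sum_{i=-\ell}^{k} A^{k-i} d_i$ of the expansion of $A^k x$ lies in $\Z^m$ (each $A^{j}d_r$ with $j\geq 0$ is an integer vector, since $D\subset \Z^m$ and $A$ has integer entries) and therefore contributes nothing to the fractional part in the sense of Definition \ref{def:int-frac-part}. This is the only step where the standard-number-system hypothesis really enters.

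Next, I would substitute this identity into $\sum_{i=0}^{n-1}\{A^i x\}$ and interchange the two sums --- legitimate because $D$ is finite and $A^{-1}$ is a contraction, so the double series converges absolutely. Regrouping by the absolute digit index $m = j + i$ gives
\[
\sum_{i=0}^{n-1} \{A^i x\} = \sum_{m=1}^{\infty} \Bigl(\sum_{i=0}^{\min(n,m)-1} A^{i-m}\Bigr) d_m.
\]
Multiplying both sides on the left by $A-I$ and applying the telescoping identity collapses the inner sum to $I - A^{-m}$ when $m \leq n$ and to $A^{n-m} - A^{-m}$ when $m > n$.

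The final step is to collect the three resulting pieces: the $I\cdot d_m$ contributions for $1\leq m\leq n$ combine to $\sum_{m=1}^{n} d_m$; the $-A^{-m}d_m$ contributions over all $m\geq 1$ assemble into $-\{x\}$; and the $A^{n-m}d_m$ contributions for $m>n$, under the substitution $j = m-n$, become $\sum_{j=1}^{\infty} A^{-j} d_{j+n}$, which the shifted-expansion identity identifies with $\{A^n x\}$. I expect the main obstacle to be purely book-keeping --- the split at $m=n$ and the clean justification of the summation interchange --- rather than anything conceptually new beyond the informal proof of Lemma \ref{lem:intro}.
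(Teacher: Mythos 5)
Your proposal is correct and follows essentially the same route as the paper's proof: both rest on the observation that the fractional part annihilates the integer terms $A^{i-j}d_j$ with $i\geq j$, a digit-by-digit interchange of summation, and the matrix geometric series $(A-I)\sum_{i=0}^{k-1}A^i=A^k-I$. The only difference is bookkeeping --- you organize everything as one double sum split at $m=n$, whereas the paper first splits $\{x\}$ into the head $x_n=\sum_{i=1}^{n}A^{-i}d_i$ and the tail $y_n=\sum_{i=n+1}^{\infty}A^{-i}d_i$ and sums each piece separately --- and the resulting three terms $\sum d_m$, $-\{x\}$, $\{A^nx\}$ emerge identically.
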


\begin{proof} First, split up $\{x\}$ as follows
\bse
\{x\}=\sum_{i=1}^{n}\,A^{-i}d_i + \sum_{i=n+1}^\infty\,A^{-i}d_i := x_n + y_n \,.
\label{eq:xn-yn}
\ese
As in Lemma \ref{lem:intro}, one observes that the summation can be carried out for each digit
separately, and then the result can be added up. So let us start with the digits in $x_n$.
For a fixed $j\in\{1,\cdots ,n\}$, using the fact that $A-I$ is invertible, we get
\bsenn
\sum_{i=0}^{n-1}\,\{A^i A^{-j}d_j\}= \sum_{i=0}^{j-1}\,A^i A^{-j}d_j= (A-I)^{-1}\,(A^j-I)\,A^{-j}d_j\,.
\esenn
In the first equality, $A^i A^{-j}d_j$ are integer vectors for $i\geq j$, so their fractional parts are
zero. Thus we can change the upper limit of the sum in order to remove $\{\cdot \}$. The second equality
is a geometric series. Multiplying by $A-I$ and then summing over $j\in\{1,\cdots ,n\}$ gives
\bsenn
(A-I)\sum_{i=0}^{n-1}\,\{A^i x_n\}= \sum_{j=1}^{n}\,d_j - x_n\,.
\esenn
Since $\{A^iy_n\}=A^iy_n$ for all $0<i\leq n$, the remaining summation is also a geometric series:
\bsenn
(A-I)\sum_{i=0}^{n-1}\,\{A^iy_n\}=(A-I)\sum_{i=0}^{n-1}\,A^iy_n=(A^n-I)y_n\,.
\esenn
Adding the last two displayed equations gives
\bsenn
(A-I)\sum_{i=0}^{n-1}\,\{A^i(x_n+y_n)\}=\sum_{j=1}^{n}\,d_j + A^ny_n- x_n - y_n\,.
\esenn
and noting that $x_n+y_n=\{x\}$ and $A^ny_n=\left\{A^nx\right\}$ gives the result.
\end{proof}

\begin{cory} If $(A,D)$ is a standard number system, then for
any $x\in \R^m$ with a base $A$ expansion $\sum_{i=-\ell}^{\infty}\,A^{-i}d_i$
\bsenn
\lim_{n\rightarrow \infty}\left((A-I)\,\frac 1n \,\sum_{i=0}^{n-1}\,\{A^ix\}-\frac 1n \,\sum_{i=1}^{n}\,d_i\right) =0\,.
\esenn
\label{cor:average}
\end{cory}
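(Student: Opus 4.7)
The plan is to divide both sides of Theorem \ref{thm:summation} by $n$, which yields
\[
(A-I)\,\frac{1}{n}\sum_{i=0}^{n-1}\{A^ix\} \;-\; \frac{1}{n}\sum_{i=1}^n d_i \;=\; \frac{1}{n}\bigl(\{A^nx\} - \{x\}\bigr).
\]
The corollary will follow immediately once the right-hand side is shown to tend to $0$ as $n \to \infty$. Since $\{x\}$ does not depend on $n$, the term $\{x\}/n$ is trivially null, so the real task is to control the size of $\{A^nx\}/n$, for which it suffices to prove that $\{A^nx\}$ remains bounded uniformly in $n$.

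Unpacking the definition of fractional part, $\{A^nx\} = \sum_{i=1}^\infty A^{-i} d_{n+i}$. Two facts make this quantity uniformly bounded. First, the digit set $D$ is finite (it has cardinality $q = |\det A|$), so there is a constant $M$ with $\|d\| \le M$ for every $d \in D$. Second, since $A$ is expanding, the spectral radius of $A^{-1}$ is strictly less than $1$; by Gelfand's formula (or by passing to an adapted norm), there exist $C > 0$ and $\lambda \in (0,1)$ with $\|A^{-i}\| \le C\lambda^i$ for all $i \ge 0$. The triangle inequality then gives
\[
\|\{A^nx\}\| \;\le\; CM\sum_{i=1}^\infty \lambda^i \;=\; \frac{CM\lambda}{1-\lambda},
\]
a bound independent of $n$. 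Dividing by $n$ makes $\{A^nx\}/n \to 0$, and the desired limit follows.

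The only step requiring genuine care is the geometric decay estimate $\|A^{-i}\| \le C\lambda^i$: one must pass from the qualitative statement that every eigenvalue of $A$ has modulus exceeding $1$ to a quantitative operator-norm bound, which has to accommodate the possibility that $A$ is not diagonalizable (so Jordan blocks could temporarily inflate $\|A^{-i}\|$ before the geometric decay takes hold). Once that standard piece of linear algebra is in place, the corollary is essentially a one-line consequence of Theorem \ref{thm:summation}.
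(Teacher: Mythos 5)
Your proof is correct and follows essentially the same route as the paper: divide the identity of Theorem \ref{thm:summation} by $n$ and observe that $\{A^nx\}$ and $\{x\}$ are bounded uniformly in $n$. The paper simply asserts this a priori bound on fractional parts, whereas you justify it via the geometric decay $\|A^{-i}\|\le C\lambda^i$; that is a worthwhile detail but not a different argument.
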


\begin{proof} Note that in Theorem \ref{thm:summation}, $\{x\}$ and $\left\{A^nx\right\}$ are fractional numbers. Thus their modulus is less than some a priori bound. The corollary follows immediately upon
dividing by $n$ and taking a limit as $n\rightarrow \infty$.
\end{proof}

Theorem \ref{thm:summation} also has a curious corollary analogous to equation \ref{eq:integersum}.

\begin{cory} If $(A,D)$ is a standard number system, then for
any $x\in \R^m$ with a base $A$ expansion $\sum_{i=-\ell}^{\infty}\,A^{-i}d_i$
\bsenn
(A-I)\,\sum_{i=0}^{n-1}\,\left[A^ix\right]=\left[A^nx\right]-[x]-\sum_{i=1}^{n}\,d_i \,.
\esenn
\label{cory:summation}
\end{cory}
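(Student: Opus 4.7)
The plan is to derive this corollary directly from Theorem \ref{thm:summation} together with the basic decomposition $A^i x = [A^i x] + \{A^i x\}$, which by Definition \ref{def:int-frac-part} holds whenever both parts are computed from the same underlying expansion $\boldsymbol{d}$ of $x$. I would therefore begin by fixing that expansion and noting that the shifted expansion of $A^i x$ (obtained by moving the ``radix point'' to the right by $i$ positions) yields consistent integral and fractional parts, so that the identity $[A^i x] = A^i x - \{A^i x\}$ is valid for every $i \geq 0$.

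Next, I would sum this identity from $i=0$ to $n-1$ and multiply through on the left by $A-I$, obtaining
\bsenn
(A-I)\sum_{i=0}^{n-1} [A^i x] \;=\; (A-I)\sum_{i=0}^{n-1} A^i x \;-\; (A-I)\sum_{i=0}^{n-1} \{A^i x\}.
\esenn
The first term on the right is a straightforward geometric series: since $A-I$ is invertible (the eigenvalues of $A$ all exceed $1$ in modulus), the telescoping gives $(A-I)\sum_{i=0}^{n-1} A^i x = (A^n - I)x$. The second term is exactly what Theorem \ref{thm:summation} evaluates, namely $\sum_{i=1}^n d_i + \{A^n x\} - \{x\}$.

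Substituting these two expressions and regrouping, the right-hand side becomes
\bsenn
A^n x - \{A^n x\} \;-\; \bigl(x - \{x\}\bigr) \;-\; \sum_{i=1}^n d_i,
\esenn
which, applying the decomposition in reverse, is precisely $[A^n x] - [x] - \sum_{i=1}^n d_i$. This yields the claimed identity.

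The only genuine subtlety, rather than a serious obstacle, is making sure that the integral and fractional parts used throughout the calculation all come from the same expansion $\boldsymbol{d}$ of $x$; once that bookkeeping is in place, the proof is a one-line substitution into the formula of Theorem \ref{thm:summation}. I do not expect any harder step.
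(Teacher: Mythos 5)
Your proposal is correct and follows exactly the paper's own argument: decompose $\left[A^ix\right]=A^ix-\left\{A^ix\right\}$, sum the first piece as a geometric series, and evaluate the second via Theorem \ref{thm:summation}. The paper states this in two sentences; you have simply written out the same computation with the bookkeeping made explicit.
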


\begin{proof} We note that $\sum_{i=0}^{n-1}\,\left[A^ix\right]$ equals $\sum_{i=0}^{n-1}\,A^ix-\sum_{i=0}^{n-1}\,\left\{A^ix\right\}$. The first of these is a geometric series
and the second is given by Theorem \ref{thm:summation}.
\end{proof}

It is instructive to look at a few examples. First, consider $\frac 54$ expanded in base 2 as
$1.01000\cdots$ and take $n=4$ in Theorem \ref{thm:summation}. Since $A-I=1$, the left hand side gives
$\frac 14 + \frac 12 + 0 + 0$. The right hand side gives $0+1+0+0$ for the sum of the digits and
$0-\frac 14$ for $\{A^nx\} - \{x\}$. Thus
\bsenn
\left(\frac 14 +\frac 12 +0 +0\right)= (0+1+0+0) +0-\frac 14 \,.
\esenn
On the other hand, if we expand $\frac 54$ as $1.00111\cdots$, then the same calculation gives
\bsenn
\left(\frac 14 +\frac 12 +1 +1\right)= (0+0+1+1) +1-\frac 14 \,.
\esenn
Note that in this case $\{x\}$ does not have its usual meaning (see Definition \ref{def:int-frac-part}).

We do the same computations for Corollary \ref{cory:summation}. First, we consider the expansion
$1.01000\cdots$ and again $n=4$. The corollary gives
\bsenn
\left(1+2+5+10\right)=20-1-(0+1+0+0) \,.
\esenn
The other expansion, $1.00111\cdots$, gives the following equality
\bsenn
\left(1+2+4+9\right)=19-1-(0+0+1+1) \,.
\esenn
Corollary \ref{cor:average} is also easy to check.

\section{Self-Affine Tilings.}
\label{chap:tilings}

It is natural to ask whether at least some of these computations involving fluctuations can be done in
higher dimensions. The answer is yes. We will discuss such an example in Section \ref{chap:tileflucs}.
First we need some background.

To start with a familiar example, in the standard binary (or decimal) expansion, the set of fractional
parts $\cal F$ form a compact \emph{tile}, $[0,1]$, and the \emph{tiling set} is $\Z$. So, $[0,1]+\Z$
covers $\R$, and any two translations of $[0,1]$ by distinct elements of $\Z$ have an intersection of
measure zero. Interestingly, $\Z$ is \emph{not} the same as the set of integer parts ${\cal I}$ (which
are the non-negative integers). However, the collection of differences in ${\cal I}$ does contain $\Z$.
So, in this case, ${\cal F}$ is a tile and the tiling set are the elements of ${\cal I}-{\cal I}$. This
turns out to be a common pattern. It is, for example, easy to see that the same is true for the usual
decimal (base 10) expansion.

We now give some results for systems $(A,D)$ as in Definition \ref{def:numbersystem}.
Denote the set of all differences in $\cal I$ by
\bsenn
\Delta := \cal I - \cal I \,.
\esenn

\begin{defn}
If $(A,D)$ is a standard number system, then we say that ${\cal F}+\Delta$ is a \emph{tiling} (of $\R^m$) if
$\R^m = {\cal F}+\Delta$, but any two distinct translates of ${\cal F}$ by elements of ${\Delta}$ have intersection of
measure zero.
\label{def:tile}
\end{defn}

The following result gives some criteria for when a number system gives rise to a tiling.

\begin{prop} Given the number system $(A,D)$ as in Definition \ref{def:numbersystem}, then
${\cal F}+\Delta$ is a tiling if and only if $\Delta$ is a lattice \cite{HSV}. This is always the case
in dimension one \cite{groch}. In dimension 2 and 3, it includes all cases where $D$ has two elements
\cite{HSV}.
\label{prop:tiling}
\end{prop}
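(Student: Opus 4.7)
The plan is to prove the biconditional characterization first, and then verify the two low-dimensional sufficient conditions separately.

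For ``$\mathcal{F}+\Delta$ is a tiling iff $\Delta$ is a lattice,'' I would begin by recording that $\mathcal{F}$ is the unique nonempty compact attractor of the contractive iterated function system $\{x\mapsto A^{-1}(x+d):d\in D\}$, and therefore satisfies the self-similarity identity $A\mathcal{F}=\bigcup_{d\in D}(\mathcal{F}+d)$. Iterating yields $A^n\mathcal{F}=\bigcup_{\delta\in\mathcal{I}_n}(\mathcal{F}+\delta)$, where $\mathcal{I}_n\subset\mathcal{I}$ collects integer parts of length at most $n$. Since $A$ is expanding, $\bigcup_n A^n\mathcal{F}=\R^m$, so $\mathcal{F}+\mathcal{I}=\R^m$, and a fortiori $\mathcal{F}+\Delta=\R^m$ comes for free. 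For the forward implication, if $\mathcal{F}+\Delta$ is a tiling, then $\Delta$ is uniformly discrete (tiles have positive volume) and of full rank (they cover $\R^m$); moreover, translating the tiling by any $\delta\in\Delta$ must permute the tiles, forcing $\Delta$ to be closed under subtraction. A discrete full-rank subgroup of $\R^m$ is a lattice. For the reverse implication, assume $\Delta$ is a lattice. Taking Lebesgue measure of the self-similarity identity yields $q\lambda(\mathcal{F})=\lambda(A\mathcal{F})\leq\sum_{d\in D}\lambda(\mathcal{F})=q\lambda(\mathcal{F})$, and the forced equality means the union is essentially disjoint. Iterating propagates this disjointness through each $\mathcal{I}_n$, and comparing $\lambda(\mathcal{F})$ against the covolume of $\Delta$ upgrades local essential disjointness to global essential disjointness of all $\mathcal{F}+\delta$ for $\delta\in\Delta$.

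For the one-dimensional case, $A\in\Z$ with $|A|\geq 2$ and $D$ is a complete residue system modulo $A$ containing $0$. A greedy $|A|$-ary expansion (adjusted for the sign of $A$ and for the particular choice of digits in $D$) shows that every integer admits a representation $\sum_{i=0}^{\ell}A^i d_{-i}$, so $\mathcal{I}=\Z$ and $\Delta=\Z$ is trivially a lattice. For dimensions two and three with $|D|=2$, write $D=\{0,d\}$ with $d\in\Z^m\setminus A\Z^m$. Then $\mathcal{I}=\{\sum_{i=0}^{\ell}\epsilon_i A^i d:\ell\geq 0,\,\epsilon_i\in\{0,1\}\}$, so $\Delta$ is the subgroup of $\Z^m$ generated by the orbit $\{A^i d\}_{i\geq 0}$; Cayley--Hamilton confines this to the finitely generated sublattice spanned by $d,Ad,\dots,A^{m-1}d$. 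The condition $|\det A|=2$ together with the expanding assumption forces each eigenvalue of $A$ to have modulus strictly between $1$ and $2$, so no eigenvalue is a rational integer. Consequently, in dimensions $m=2,3$ the characteristic polynomial of $A$ has no rational roots and (being of degree at most $3$) is irreducible over $\Q$, so every nonzero vector is a cyclic vector for $A$. Hence $\{d,Ad,\dots,A^{m-1}d\}$ is linearly independent over $\R$, and $\Delta$ has full rank.

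The main obstacle will be promoting the \emph{local} essential disjointness --- which comes directly from the measure identity for the self-similarity union --- to \emph{global} essential disjointness of every pair of translates $\mathcal{F}+\delta_1,\mathcal{F}+\delta_2$ with distinct $\delta_1,\delta_2\in\Delta$. A priori, two tiles that are far apart in $\Delta$ could overlap without contradicting any local identity; ruling this out requires a careful comparison of $\lambda(\mathcal{F})$ against the covolume of $\Delta$, which is the main technical content of the cited results. A secondary obstacle is the cyclicity argument in the $|D|=2$ case, which is exactly where the restriction to dimensions two and three enters: in higher dimensions, integer matrices of determinant $\pm 2$ can have reducible characteristic polynomials without rational roots, blocking this elementary cyclicity conclusion.
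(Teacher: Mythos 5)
First, a point of reference: the paper does not prove this proposition at all --- it is a survey-style statement whose three clauses are attributed to the cited literature (the equivalence to Lagarias--Wang/HSV-type results, the one-dimensional case to Gr\"ochenig, the two-digit case in dimensions 2 and 3 to HSV). So there is no in-paper argument to compare against, and any complete proof would have to reconstruct nontrivial theorems. Measured against that bar, your proposal has several genuine gaps. The most concrete one is the one-dimensional case: your claim that a greedy expansion shows $\mathcal{I}=\Z$, hence $\Delta=\Z$, is contradicted by the paper's own example $A=3$, $D=\{-5,0,20\}$, for which every element of $\mathcal{I}$ is a multiple of $5$ and $\Delta=5\Z$. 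The actual content of Gr\"ochenig's result is that $\Delta$ is \emph{some} lattice $g\Z$, and establishing that (in particular, that the difference set is closed under addition) is exactly what your greedy argument skips.

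The same ``closed under addition'' issue undermines the other two parts. In the forward direction of the equivalence you assert that translating the tiling by $\delta\in\Delta$ permutes the tiles, which presupposes $\Delta+\delta=\Delta$ --- precisely the group property you are trying to prove; the step is circular. (A translation set of a tiling by one prototile need not be a group in general, so this cannot be waved through.) In the reverse direction you correctly locate the hard step (promoting essential disjointness of $\bigcup_d(\mathcal{F}+d)$ inside $A\mathcal{F}$ to essential disjointness over all of $\Delta$) but then defer it to ``the cited results,'' so nothing is actually proved; note also that the measure-counting step silently uses $\lambda(\mathcal{F})>0$, itself a theorem for standard digit sets. Finally, in the two-digit case your cyclicity argument is correct and genuinely explains why dimensions $2$ and $3$ are special (eigenvalues of modulus in $(1,2)$ force an irreducible characteristic polynomial), but it only shows that the \emph{group generated by} $\Delta$ is a full-rank lattice. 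Whether the set $\Delta=\bigl\{\sum_i\epsilon_iA^id:\epsilon_i\in\{-1,0,1\}\bigr\}$ itself fills out that lattice (e.g.\ whether $2d\in\Delta$) is again the missing additive-closure step, which is the point of the recursive construction the paper illustrates in Figure 4.2 and of the HSV argument it cites.
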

There are more general criteria for when number systems in dimension 2 and greater give rise
to tilings, however, they are more complicated to state (but see \cite{CHR, LW2}).

It may come as a surprise that $\Delta$ is not always a lattice. So here is a counter-example \cite{LW}.
\bse
A= \begin{pmatrix} 2&1\\0&2 \end{pmatrix}\;, \quad D=\left\{\begin{pmatrix} 0\\0 \end{pmatrix},
\begin{pmatrix} 3\\0 \end{pmatrix}, \begin{pmatrix} 0\\1 \end{pmatrix}, \begin{pmatrix} 3\\1
\end{pmatrix}\right\}\,.
\label{eq:lagwang}
\ese
\begin{figure}[!ht]
\begin{center}
\includegraphics[width=12.0cm]{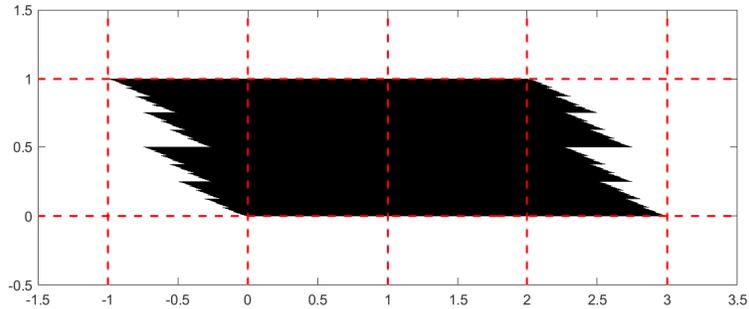}
\caption{The tile generated by the number system of equation \eqref{eq:lagwang}.}
\label{fig:lagwang}
\end{center}
\end{figure}
The corresponding tile can be seen in Figure \ref{fig:lagwang}. It turns out that ${\cal F}$ is a tile
with tiling set $3\Z\times \Z$ but that tiling set is not equal to $\Delta$.
We also point out that a lattice can mean a sublattice of $\Z$ (or $\Z^m$). For example, in $\R$,
\bsenn
A=3\;, \quad D=\left\{-5,0,20 \right\} \,,
\esenn
gives rise to a tile whose tiling set is $\Delta=5\Z$. So the Lebesgue measure of $\cal F$ (a single tile)
is 5.

We briefly discuss some general properties of tilings.
First, from \eqref{eq:IandF} one sees that a tile $\cal F$ satisfies
\bse
{\cal F}=A^{-1}\left({\cal F}+D\right)\,.
\label{eq:self-affine}
\ese
This leads one to define a map $\tau$ on the space of a priori bounded, compact sets $Z$ (with
an appropriate topology), namely
\bsenn
\tau(Z)=A^{-1}\left(Z+D\right)\,.
\esenn
One can prove that $\tau$ is a contraction on a complete metric space and thus has a unique
fixed point \cite{Hut}. That fixed point, of course, is $\cal F$ (by \eqref{eq:self-affine}). The
dynamical system $\tau$ is usually called an \emph{iterated function system}.

On the other hand, \eqref{eq:self-affine} also --- and quite literally --- says that $\cal F$ consists of
$q$ affine copies ${\cal F}_i=A^{-1}({\cal F}+\delta_i)$ of itself, where $D=\cup_{i=1}^q\{\delta_i\}$.
This allows us to define an expanding map
$T$ from $\cal F$ to itself by setting
\bse
T(x)= Ax-\delta_i \;\logif \; x\in {\cal F}_i\,.
\label{eq:dynsys}
\ese
This is the generalization of the angle doubling map in equation \eqref{eq:expandingmap}.
There is a very simple algorithm to generate an expansion for any given vector $x\in \R^m$.
It is the same algorithm that works in the case that $T(x)=\{2x\}$.
Namely, first multiply by $A^{-k}$ (if necessary) to ensure that $x$ is a fractional number,
i.e. so that $y=A^{-k}x \in {\cal F}$. Now if $y\in {\cal F}_{i}$, then set $d_1:=\delta_{i}$.
Then compute $T(y)=Ay-d_1$. Next, if $T(y)\in {\cal F}_{j}$, then set $d_2:=\delta_{j}$.
Compute $T^2(y)=AT(y)-d_2$, and so on. This computes an expansion of $y$.
At the end we multiply back by $A^k$ to get the expansion of $x$.

Note that ambiguity arises only in the case where $T^k(y)$ falls in the intersection of 2 or more of the ${\cal F}_i$.
\bsenn
T^k(y)=\sum_{i=1}^\infty\,A^{-i}d_i = \sum_{i=1}^\infty\,A^{-i}d_i' \,.
\esenn
This happens if its image under $AT^k(y)$ lies in the intersection of two distinct tiles.
In a standard number system, these have measure zero according to Definition \ref{def:numbersystem}.
For each $k$, we obtain a set a measure zero where $d_k$ is not unique.
Thus the exceptions form a countable union of measure zero sets, and thus have measure zero. In fact, we can
do better. For example, if $A:\R^m\rightarrow \R^m$ is a similarity, the Hausdorff dimension $\partial {\cal F}$
can be computed and is always strictly less than $m$ \cite{veer-mex}. The exceptional set,
being a countable union of copies, has the same dimension.

\section{Tiles and Fluctuations.}
\label{chap:tileflucs}

In Section \ref{chap:fluctuations}, we computed the fluctuations for the map given by \eqref{eq:expandingmap}. Here we do the same, but now for the more general map defined in \eqref{eq:dynsys}.
However, to keep things simple, we consider one example: a 2-dimensional tile with 2 digits. There are exactly 6
possibilities for the characteristic polynomial of $A$, namely \cite{HSV}: $x^2\pm 2$, $x^2\pm x+2$,
and $x^2\pm 2x +2$. Let us consider the last polynomial and set
\bse
A= \begin{pmatrix} -1&1\\-1&-1 \end{pmatrix}\;, \quad D=\left\{\begin{pmatrix} 0\\0 \end{pmatrix},
\begin{pmatrix} 1\\0 \end{pmatrix}\right\} \,.
\label{eq:2digit}
\ese
Denote these digits simply by $0$ and $d$. The matrix $A$ has eigenvalues $1\pm i$ and so is a similarity
($\sqrt{2}$ times a rotation by $\frac{-3\pi}{8}$). Therefore the resulting tile $\cal F$ is self-similar
as opposed to merely self-affine.

\begin{figure}[pbth]
\begin{center}
\includegraphics[width=9.0cm]{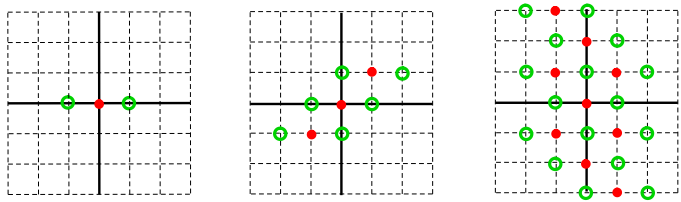}
\caption{Starting from the left with the origin in red. Add the non-zero differences in $D$. In the next figure
to the right, in red $A$ times the previous collection of points, then in green the non-zero differences
in $D$ are added to each red point.}
\label{fig:lattice}
\end{center}
\end{figure}
First, we check that this number system indeed gives the tiling ${\cal F}+\Z^2$. By Proposition \ref{prop:tiling}, we need to make sure that $\Delta$ equals $\Z^2$. The easiest way is to do this
recursively as illustrated in Figure \ref{fig:lattice}.
Start with the origin (left, in red). Then add the non-zero elements of $(D-D)$ (left, in green).
Multiply the result by $A$ to get $A(D-D)$ (middle, red) and add $(D-D)$ (middle, green) to get
$A(D-D)+(D-D)$. Repeating the procedure to get $A^2(D-D)+A(D-D)+(D-D)$ gives the figure on the right.
Readers should be able to convince themselves that this recursion obtains $\Delta=\Z^2$.

We conclude that ${\cal F} +\Z^2$ is a tiling. The corresponding tile can be seen in Figure
\ref{fig:2digit}. It is known as the Heighway dragon and has a long and storied history (see \cite{Tab}
and references therein).
\begin{figure}[!ht]
\begin{center}
\includegraphics[width=8.0cm]{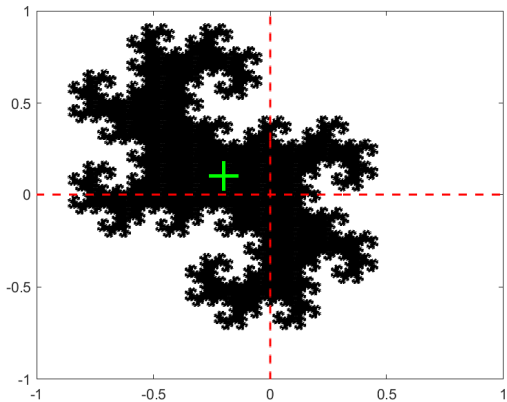}
\caption{The tile generated by the number system of equation \eqref{eq:2digit}. The center of mass
is located at the center of the green cross.}
\label{fig:2digit}
\end{center}
\end{figure}

Recall that we are interested in the fluctuations
\bsenn
S(n,x) - n \overline{x}=\sum_{i=0}^{n-1} \left\{A^ix\right\}\;-n\overline{x}\,,
\esenn
of the map \eqref{eq:dynsys}, where $\overline{x}$ is its average --- or center of mass. So we compute the average of
$S(n,x)$ with  with $x$ uniformly distributed in $\cal F$. Again, Theorem \ref{thm:summation} gives the answer. For large $n$, the contribution of $\left\{A^nx\right\}-x$ is negligible, and so
\bse
S(n,x)\approx (A-I)^{-1}\sum_{i=1}^nd_i=p_n (A-I)^{-1}d\,,
\label{eq:fluc-heighway}
\ese
where $p_n$ is the number of times $d$ occurs in the first $n$ digits of $x$. The location $\overline{x}$ of the
center of mass of this tile follows immediately, since we only have to take the average of $p_n$, which is
$1/2$. With \eqref{eq:fluc-heighway} this implies the next result.

\begin{prop} The center of mass for the 2-digit tile defined by \eqref{eq:2digit}, is given by
\bsenn
\overline{x}=\frac{1}{2} (A-I)^{-1}d \,.
\esenn
\label{prop:centermass}
\end{prop}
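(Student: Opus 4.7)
The plan is to make rigorous the heuristic sketched in the paragraph immediately preceding the statement, which identifies $\overline{x}$ as an ensemble average of the time average controlled by Theorem \ref{thm:summation}. For $x \in \mathcal{F}$ the canonical $A$-expansion has no integer part, so $\{x\}=x$ and $\{A^n x\}=T^n(x)\in\mathcal{F}$ stays bounded; applying Theorem \ref{thm:summation} and dividing by $n$ yields $(A-I)\,\tfrac{1}{n}\sum_{i=0}^{n-1}\{A^i x\} = \tfrac{1}{n}\sum_{i=1}^{n} d_i(x) + \varepsilon_n(x)$, with $\|\varepsilon_n(x)\| \leq C/n$ uniformly on the (bounded) set $\mathcal{F}$.

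Next I would integrate both sides against normalized Lebesgue measure on $\mathcal{F}$ and pass to the limit. The key fact is that the map $T$ of \eqref{eq:dynsys} preserves Lebesgue measure on $\mathcal{F}$: by \eqref{eq:self-affine}, $\mathcal{F}$ decomposes up to measure zero as $\mathcal{F}_0 \sqcup \mathcal{F}_d$ with $|\mathcal{F}_0|=|\mathcal{F}_d|=|\mathcal{F}|/|\det A|=|\mathcal{F}|/2$, and $T$ restricted to each piece is an affine bijection onto $\mathcal{F}$ with Jacobian $|\det A|=2$. Consequently $\mathbb{E}[T^i(x)]=\overline{x}$ for every $i\geq 0$, so the expectation of the left-hand side equals $(A-I)\overline{x}$. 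On the right, $d_i(x)=d$ precisely when $T^{i-1}(x)\in\mathcal{F}_d$, so measure preservation forces $\mathbb{P}(d_i=d)=1/2$, hence $\mathbb{E}[d_i]=d/2$ for every $i$. In the limit $n\to\infty$ the error term disappears and we obtain $(A-I)\overline{x} = d/2$. Since $A$ has eigenvalues $1\pm i$, the matrix $A-I$ is invertible and the proposition follows.

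The only real obstacle is the verification that $T$ preserves Lebesgue measure on $\mathcal{F}$ and that the two pieces of the self-affine decomposition have equal measure; both are immediate consequences of the tiling discussion in Section \ref{chap:tilings}, but they are what upgrades a heuristic time average to a rigorous ensemble average. Once in place, everything else is algebra. As a sanity check, one can bypass Theorem \ref{thm:summation} altogether by a one-line self-similarity argument: writing $X \sim \mathrm{Unif}(\mathcal{F})$ as $A^{-1}(Y+\delta)$ with $Y \sim \mathrm{Unif}(\mathcal{F})$ independent of $\delta$ uniform on $\{0,d\}$ and taking expectations gives $\overline{x} = A^{-1}\overline{x} + \tfrac{1}{2}A^{-1}d$, which rearranges to the claimed formula. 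The summation-formula proof above can be viewed as unfolding this fixed-point identity $n$ iterates at a time, which is thematically consistent with the rest of the paper.
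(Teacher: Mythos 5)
Your main argument is the same one the paper uses --- the paper's ``proof'' is really just the paragraph preceding the statement, which applies Theorem \ref{thm:summation}, discards the bounded term $\left\{A^nx\right\}-\{x\}$ after dividing by $n$, and then ``takes the average of $p_n$, which is $1/2$.'' What you add, correctly, is the justification the paper glosses over: that averaging $\frac 1n\sum_{i=0}^{n-1}T^i(x)$ over $x$ uniform in $\cal F$ really does give $\overline{x}$ (because $T$ preserves normalized Lebesgue measure on $\cal F$, via the decomposition ${\cal F}={\cal F}_0\cup{\cal F}_d$ into pieces of equal measure $|{\cal F}|/|\det A|$), and that consequently each digit $d_i$ equals $d$ with probability exactly $1/2$, so $\mathbb{E}[p_n/n]=1/2$ for every $n$, not just in the limit. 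Both points are needed to turn the paper's heuristic into a proof, and both check out. Your closing ``sanity check'' is in fact the cleanest route of all: writing a uniform point of $\cal F$ as $A^{-1}(Y+\delta)$ with $Y$ uniform on $\cal F$ and $\delta$ uniform on $\{0,d\}$ independent, and taking expectations of the self-affine identity \eqref{eq:self-affine}, gives $(A-I)\overline{x}=\frac 12 d$ in one line, with no limit and no appeal to Theorem \ref{thm:summation}; it also generalizes immediately to any standard number system whose digits occur with equal frequency, giving $\overline{x}=(A-I)^{-1}\cdot\frac 1q\sum_{\delta\in D}\delta$. The summation-formula route is the thematically appropriate one for this paper, but the fixed-point identity is the sharper proof.
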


\vskip -0.2in\noindent
For the Heighway dragon defined by \eqref{eq:2digit}, we get (see Figure \ref{fig:2digit})
\bsenn
\overline{x}:=\frac 12 (A-I)^{-1}d=\frac{1}{10} \begin{pmatrix} -2 & -1 \\ 1 & -2 \end{pmatrix}\begin{pmatrix} 1\\ 0 \end{pmatrix} =
\frac{1}{10} \begin{pmatrix} -2\\ 1 \end{pmatrix}\,.
\esenn

Together with equation \eqref{eq:fluc-heighway} this says that $S$ is approximately equal to $2p_n\overline{x}$.
This implies another, and more remarkable, corollary, namely that, for large $n$, the fluctuations in this
2-digit system lie on the line $t\overline{x}$ where $t\in\R$ ! The limiting distribution itself is easy to figure
out since the distribution of $p_n$ is exactly the same as the one given in \eqref{eq:normaldistr}.
This then gives the following proposition.

\begin{prop} The distribution of $\;2(S-n \overline{x})\,n^{-1/2}$ tends to the following 1-dimensional
distribution as $n\rightarrow\infty$
\bsenn
\mathrm{Prob}\left(x\in \left\{t\overline{x} : t\leq \frac k2\right\}\right)=
\frac{1}{\sqrt{2\pi}}\,\int_{-\infty}^k\,e^{-\frac 12 u^2}\,du \,.
\esenn
\label{prop:fluctuations}
\end{prop}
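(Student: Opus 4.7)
The plan is to reduce Proposition \ref{prop:fluctuations} to the one-dimensional CLT already exploited in \eqref{eq:normaldistr}. Two inputs drive everything. First, the approximation \eqref{eq:fluc-heighway} (itself an immediate consequence of Theorem \ref{thm:summation}) reads
\[
S(n,x) = p_n\,(A-I)^{-1}d + \varepsilon_n,
\]
where the remainder $\varepsilon_n=(A-I)^{-1}(\{A^n x\}-\{x\})$ is bounded uniformly in $n$ and $x$, since $\mathcal F$ is compact. Second, Proposition \ref{prop:centermass} identifies $(A-I)^{-1}d = 2\overline{x}$. Subtracting $n\overline{x}$ and multiplying by $2/\sqrt n$, I would write
\[
V_n \;:=\; \frac{2\bigl(S(n,x)-n\overline{x}\bigr)}{\sqrt n} \;=\; \frac{2(2p_n-n)}{\sqrt n}\,\overline{x} \;+\; \frac{2\varepsilon_n}{\sqrt n}.
\]
Since $2\varepsilon_n/\sqrt n\to 0$ uniformly in $x$, the random vector $V_n$ asymptotically collapses onto the one-dimensional line $\R\,\overline{x}$; this is already the ``$1$-dimensional'' geometric content of the proposition.

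Next I would analyze the scalar coefficient $T_n := 2(2p_n-n)/\sqrt n$ in front of $\overline x$. Because $\mathcal F$ is a genuine tile --- its tiling set equals $\Delta = \Z^2$, as verified in Section \ref{chap:tileflucs} --- the map $T$ of \eqref{eq:dynsys} is a Lebesgue-measure-preserving $2$-to-$1$ map of $\mathcal F$ onto itself, with the two preimage branches $\mathcal F_0,\mathcal F_1$ of equal measure. Consequently, when $x$ is drawn uniformly from $\mathcal F$, the digit sequence $(d_1,d_2,\dots)$ is i.i.d.\ uniform on $\{0,d\}$, so $p_n\sim\mathrm{Binomial}(n,1/2)$ and the de~Moivre--Laplace CLT underlying \eqref{eq:normaldistr} applies verbatim to give $T_n$ a Gaussian limit. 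Combining this with Slutsky's theorem (to absorb the vanishing perturbation $2\varepsilon_n/\sqrt n$) yields convergence of $V_n$ in distribution to a Gaussian multiple of $\overline x$, and translating the event ``$V_n\in\{t\overline x:t\le k/2\}$'' into a scalar inequality in $T_n$ reads off the standard-normal CDF stated in the proposition.

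The main obstacle is not analytic but rather careful bookkeeping: several factors of $2$ arise --- from the two digits, from the binomial variance $n/4$, from the rescaling $2/\sqrt n$, and from the identity $(A-I)^{-1}d=2\overline{x}$ --- and one must check that they combine correctly into the CDF of \eqref{eq:normaldistr}. A secondary technical point is that the digit expansion of $x$ is only well-defined off a set of measure zero (points whose $T$-orbit meets $\partial\mathcal F_0\cap \partial\mathcal F_1$); as noted in Section \ref{chap:tilings}, this exceptional set is a countable union of pieces of Hausdorff dimension strictly less than $2$ and is therefore Lebesgue-null, hence does not affect the distribution of $V_n$ when $x$ is drawn uniformly from $\mathcal F$.
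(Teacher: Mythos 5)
Your outline follows exactly the route the paper itself takes (the paper offers no formal proof beyond the paragraph preceding the proposition): substitute \eqref{eq:fluc-heighway} and Proposition \ref{prop:centermass} to get $S-n\overline{x}\approx(2p_n-n)\overline{x}$, observe the collapse onto the line $\R\,\overline{x}$, and invoke the de~Moivre--Laplace limit for $p_n$ as in \eqref{eq:normaldistr}. Your additions --- the uniform bound on $\varepsilon_n$ from compactness of $\mathcal F$, the justification that the digits are i.i.d.\ uniform because the two branches of \eqref{eq:dynsys} have equal measure and $T$ preserves normalized Lebesgue measure on $\mathcal F$, and Slutsky's theorem to dispose of the remainder --- are all correct and in fact more careful than what the paper provides.

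The gap is that the ``bookkeeping of factors of $2$'' you explicitly defer is the entire remaining content of the proof, and when carried out it does not produce the formula as stated. Since $p_n$ has variance $n/4$, the quantity $(2p_n-n)/\sqrt n$ converges to a standard normal $Z$, hence
\[
V_n \;=\; \frac{2(2p_n-n)}{\sqrt n}\,\overline{x}+o(1)\;\Longrightarrow\; 2Z\,\overline{x}\;=\;Z\,(A-I)^{-1}d .
\]
Writing the limit as $t\overline{x}$ gives $t=2Z$, so the probability of the event $\{t\overline{x}: t\le k/2\}$ tends to $\mathrm{Prob}(Z\le k/4)=\frac{1}{\sqrt{2\pi}}\int_{-\infty}^{k/4}e^{-u^2/2}\,du$, not the stated $\frac{1}{\sqrt{2\pi}}\int_{-\infty}^{k}e^{-u^2/2}\,du$. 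Equivalently, the event whose limiting probability is the standard normal integral up to $k$ is $\{t\overline{x}: t\le 2k\}$; the natural scalar coordinate here is the coefficient of $(A-I)^{-1}d=2\overline{x}$, not of $\overline{x}$. So to finish you must either exhibit this computation and reconcile (i.e.\ correct) the constant in the statement, or locate a compensating factor of $4$ --- which the argument you and the paper share does not supply. As written, your proof stops precisely at the step where the claimed identity has to be verified, and that verification fails for the statement as printed.
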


To illustrate this counter-intuitive result numerically, we plotted the fluctuations in Figure
\ref{fig:flattening} for $n=15$ (left) and $n=50$ (right) together with the line $t\overline{x}$. In each case,
we generated $10^4$ random
binary strings (of length 15 and 50, respectively), computed $S$ for each string, and then plotted
$2(S-n \overline{x})n^{-1/2}$. The `flattening' of the distributions is clearly observable.
\begin{figure}[!ht]
\begin{center}
\includegraphics[width=6.2cm]{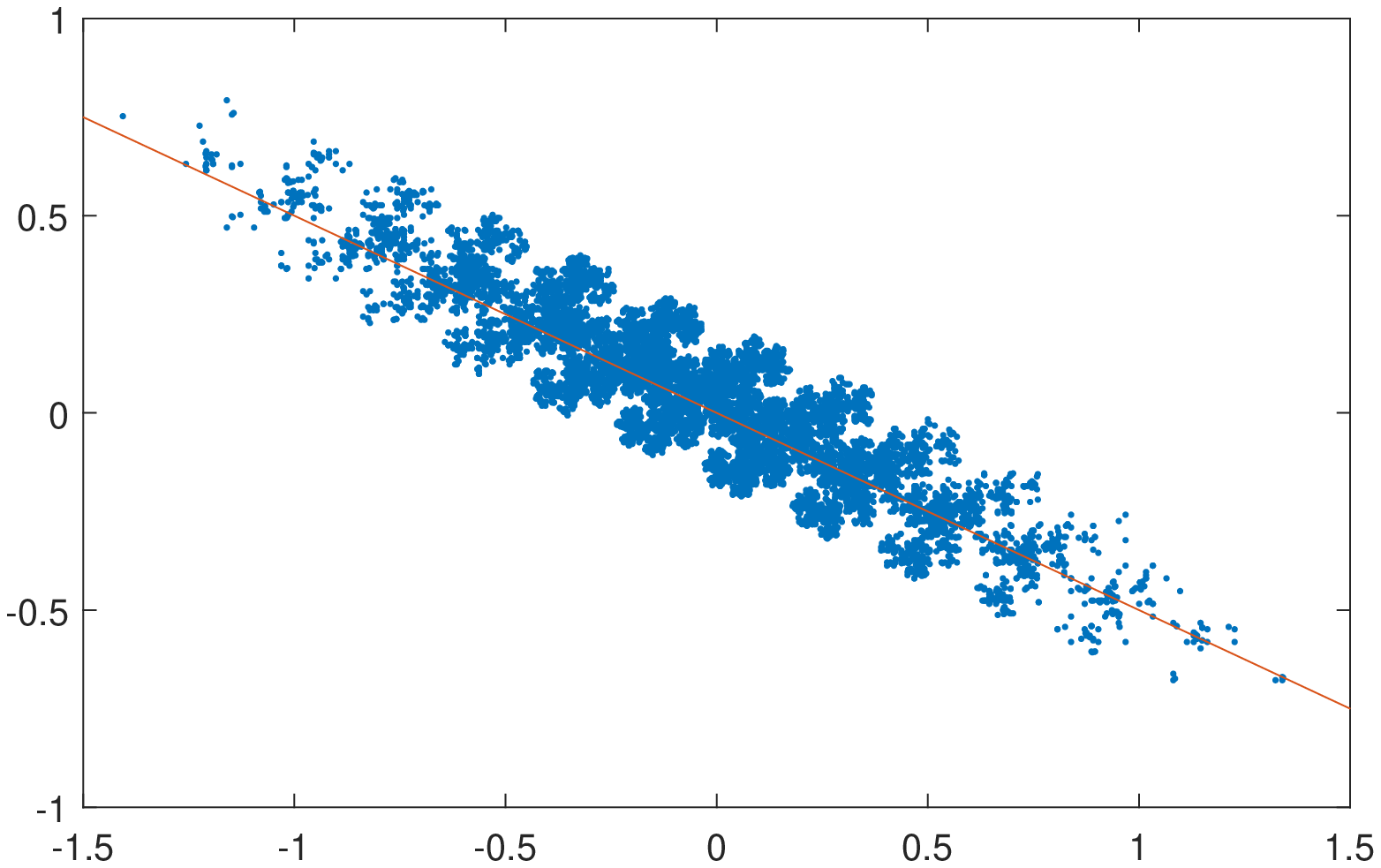}
\includegraphics[width=6.2cm]{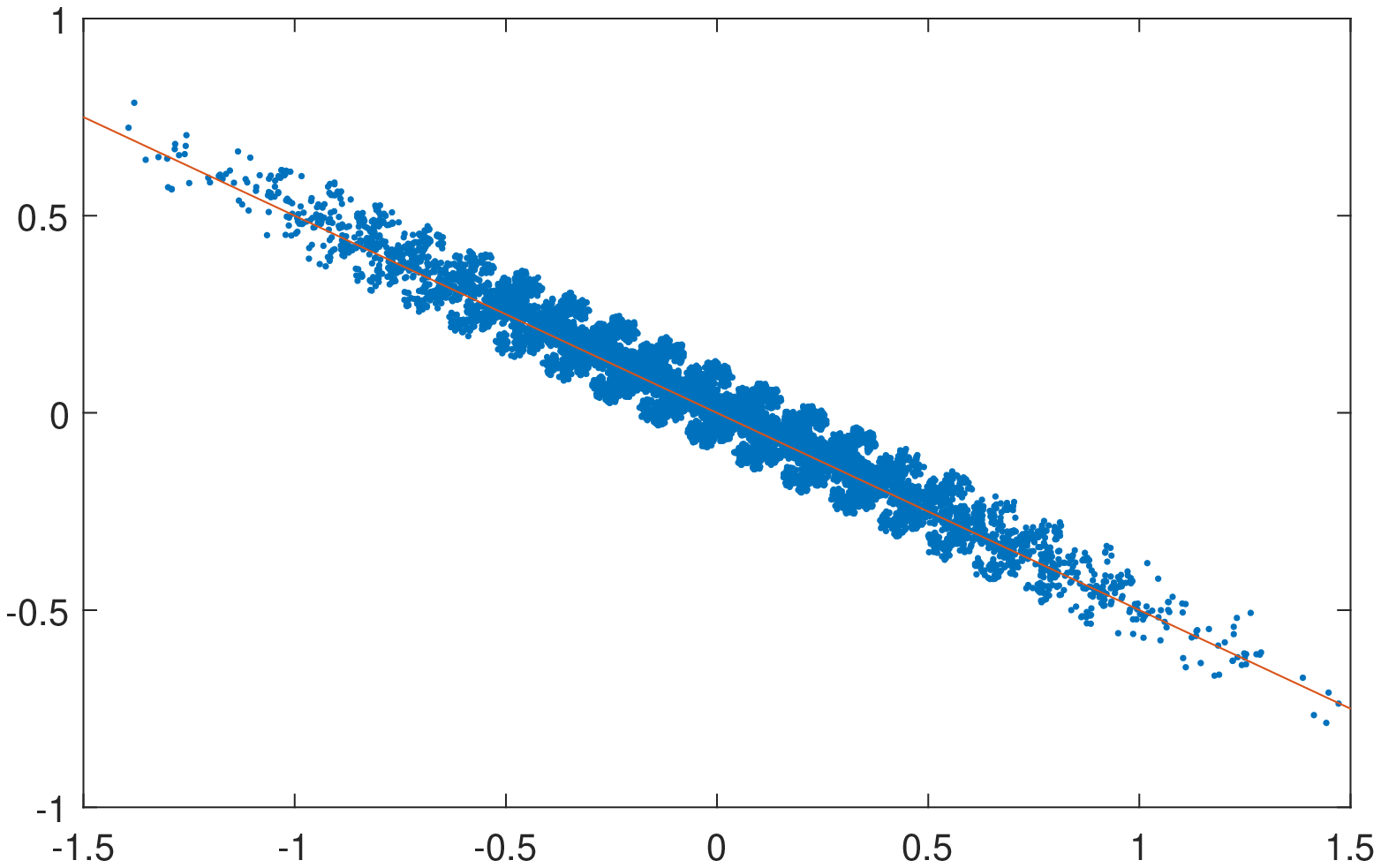}
\caption{The distributions of $\frac{2(S-n \overline{x})}{\sqrt{n}}$ for $n=15$ on the left and for
$n=50$ on the right.}
\label{fig:flattening}
\end{center}
\end{figure}

\noindent
{\bf Acknowledgment.}
We are indebted to an anonymous referee for many useful remarks that improved the paper, in particular
one observation that ended up strengthening Theorem \ref{thm:summation}.


%

\vspace{\fill}

\begin{thebibliography}{99}

\bibitem{AA}
Arnold, V. I., Avez, A. (1968). \textit{Ergodic Problems of Classical Mechanics}, New York: Benjamin.

\bibitem{banks}
Banks, J., Brooks, J., Cairns, G., Davis, G., Stacey, P. (1992). On Devaney's Definition of Chaos. 
\textit{Amer. Math. Monthly}. 99(4): 332--334. doi.org/10.2307/2324899 

\bibitem{bountis}
Bountis, A., Veerman, J. J. P., F. Vivaldi, F. (2020). Cauchy distributions for the integrable standard map. 
\textit{Physics Letters A}. 384(26): 126659. doi.org/10.1016/j.physleta.2020.126659 

\bibitem{CHR}
Conze, J. P., Herv\'e, L., Raugi, A. (1997). Pavages auto-affines, op\'erateurs de transfert, et crit\`eres de r\'eseau dans $\R^d$. 
\textit{Bol. Soc. Bras. Mat}. 28: 1--42. doi.org/10.1007/BF01235987 

\bibitem{devaney}
Devaney, R. L. (2003). \textit{An Introduction to Chaotic Dynamical Systems}, 2nd ed. Boca Raton, FL: CRC Press.

\bibitem{groch}
Gr\"ochenig, K. (1994). Orthogonality criteria for compactly supported scaling functions. 
\textit{Appl. Comput. Harmon. Anal}. 1(3): 242--245. doi.org/10.1006/acha.1994.1011

\bibitem{HSV}
Hacon, D., Saldanha, N. C., Veerman, J. J. P. (1994). Remarks on self-affine tilings. 
\textit{Exp. Math}. 3(4): 317--327. doi.org/10.1080/10586458.1994.10504300

\bibitem{Hut}
Hutchinson, J. E. (1981). Fractals and self-similarity. 
\textit{Indiana Univ. Math. J}. 30(5): 713--747. 

\bibitem{Kac}
Kac, M. (1959). \textit{Statistical Independence in Probability, Analysis and Number Theory}. 
\textit{The Carus Mathematical Monographs}, Vol. 12. Washington, DC: The Mathematical Association of America. 

\bibitem{LW}
Lagarias, J. C., Wang, Y. (1996). Self-affine tiles in $\R^n$. 
\textit{Advances in Mathematics}. 121(1): 21--49. doi.org/10.1006/aima.1996.0045

\bibitem{LW2}
Lagarias, J. C., Wang, Y. (1997). Integral self-affine tilings in $\R^n$ II: Lattice tilings. 
\textit{J. Fourier Anal. Appl}. 3(1): 83--102. doi.org/10.1007/BF02647948

\bibitem{Reichl}
Reichl, L. (2016). \textit{A Modern Course in Statistical Physics}, 4th ed. New York: Wiley.

\bibitem{Tab} 
Tabachnikov, S. (2014). Dragon curves revisited. 
\textit{The Mathematical Intelligencer}. 36(1): 13--17. doi.org/10.1007/s00283-013-9428-y

\bibitem{tsallis}
Tsallis, C. (2009). \textit{Introduction to Nonextensive Statistical Mechanics}. New York: Springer-Verlag.

\bibitem{veer-mex}
Veerman, J. J. P. (1998). Hausdorff dimension of boundaries of self-affine tiles in $\R^n$. 
\textit{Bol. Soc. Mat. Mex}. 4(2): 159--182. 

\bibitem{veer}
Veerman, J. J. P., Oberly, P. J., Fox, L. S. (2021). Statistics of a family of piecewise linear maps. 
\textit{Phys. D}. 427: 133019. doi.org/10.1016/j.physd.2021.133019

\end{thebibliography}
\end{document}